\newtheorem{thm}{Theorem}[section]
\newtheorem*{thm*}{Theorem}
\newtheorem*{corr*}{Corollary}
\newtheorem{lemma}[thm]{Lemma}
\newtheorem{prop}[thm]{Proposition}
\newtheorem*{prop*}{Proposition}
\newtheorem{corr}[thm]{Corollary}
\theoremstyle{definition}
\newtheorem*{dfn*}{Definition}
\newtheorem{exmple}[thm]{Example}
\newtheorem*{exmple*}{Example}
\newtheorem*{conj*}{Conjecture}
\theoremstyle{remark}
\newtheorem{rmk}[thm]{\textit{Remark}}
\newcommand{\bbF}{{\mathbb{F}}}
\newcommand{\bP}{{\mathbb{P}}}
\newcommand{\bQ}{{\mathbb{Q}}}
\newcommand{\bZ}{{\mathbb{Z}}}
 \newcommand\cE{{\mathcal E}} 
 \newcommand{\comp}{\raise1pt\hbox{{$\scriptscriptstyle\circ$}}}
 \def\lset{\{}  % for { 
\def\rset{\}}  % for } 
\def\set#1{\lset#1\rset} 
\def\mapright#1{\mathop{\vbox{\ialign{
                                ##\crcr
    ${\scriptstyle\hfil\;\;#1\;\;\hfil}$\crcr
 \noalign{\kern2pt\nointerlineskip}
    \rightarrowfill\crcr}}\;}}
\def\half{\frac 12}
\def\ns#1{\operatorname{\mathsf {NS}}({#1})}
\begin{document}

\title{On the maximal number of du Val singularities for   a K3 surface}
\author{Chris Peters}
%\date{\today}
\maketitle

\begin{abstract}
A complex K3 surface or an algebraic K3 surface in characteristics distinct from $2$ cannot have more than $16$ disjoint nodal curves.
\end{abstract}

\section{Introduction}
It is well known that quartic surfaces can have at most $16$ ordinary double points   and  if this is the case the surface is a (classical) Kummer surface.
Indeed, projecting from a double point exhibits the quartic surface as a double cover of the plane branched in a degree $6$ curve with at most $15$ double points
and if there are $15$ double points,   the branch locus  consists of a union of $6$ lines in general position
and  its  double cover is a classical Kummer surface as explained in \cite[p. 774]{PAG}.   For a  proof   based on  affine geometry 
over the field $\bbF_2$, see \cite{nik2}.  See  also \cite[Prop. VIII.6.1]{4authors} where it is assumed that the $16$ double points form an "even" set (to be explained below). 

The aim of this note is to give a simple proof that any  complex K3 surface, algebraic or not,   can  acquire  no more than $16$ ordinary double points, a result originally due to V.\ Nikulin ~\cite[Applications, Cor. 1]{nik2} who proved this by lattice theoretic methods.
%which I could not find in the literature. 
Remark that on the minimal resolution of such a surface the nodes give
  disjoint nodal curves and on a K3 surface  these  span a negative definite lattice in the Picard lattice, 
a priori  of rank $\le 20$ (recall that  non-algebraic surfaces are allowed) and so it  is somewhat surprising that there cannot be more than $16$ of them.

The proof is based on coding theory and  applies equally well to double points of  $A$-$D$-$E$  type, i.e. to   du Val  singularities where a refinement can be given. See Remark~\ref{rmk:refining}.
The idea of using coding theory to obtain bounds on the number of double points goes back to A. Beauville's article \cite{beau} 
and has   been further exploited in A.\ Kalker's thesis~\cite{kalker}. Applications of codes to
characteristic $2$ phenomena of K3 surfaces can be found in  I.\ Shimada's article~\cite{shima2}. 

The proof applies to algebraic as well as
non-algebraic complex K3 surfaces. The algebraic
nature of the proof ensures  that it is also valid for
  algebraic K3 surfaces in   characteristics distinct from $2$. See Remark~\ref{rmk:charp}.
  In characteristic $2$ the result  is false: there are supersingular K3 surfaces with $21$ nodes.
See \cite{shima}. 

\paragraph{Convention}
In this note $X$ is a compact complex surface, and in \S~\ref{sec:k3} it is a K3 surface. 
I assume  throughout that $H^2(X,\bZ)$ has no  torsion and I consider it as a lattice equipped with the integral pairing induced by the cup product.
In that situation the N\'eron--Severi group $\ns X$ is to be considered as a sublattice spanned by the classes of the divisors. This is a primitive sublattice of $H^2(X,\bZ)$. 

If $L$ is an integral lattice and $r\in \bQ$, I denote by $L(r)$ the same $\bZ$-module as $L$ but with form multiplied by $r$.

\paragraph{Acknowledgments} This note  materialized thanks to   ongoing  collaboration with Hans Sterk  for which I express my thanks. Email exchange 
with Daniel Huybrechts and Matthias Sch\"utt is also gratefully acknowledged.   Thanks go also to Viacheslav  Nikulin  who kindly  pointed me to his original proof.

\section{Codes and lattices}

A binary code is a linear subspace of some finite dimensional space $W$ over the field $\bbF_2$ which I shall identify with $\bbF_2^n$, $n=\dim W$.
A  vector $x$ of a  code $C$ is also called a  \emph{word}.  The number of non-zero coordinates of  $x$ is its \emph{weight}, $w(x)$.
The  dot-product  $x\cdot y= \sum x_j y_j \in \bbF_2$  
of two vectors  $ x=(x_1,\dots, x_n)$ and $y =(y_1,\dots,y_n)$ in $\bbF_q^n$  
defines a non-degenerate symmetric bilinear form and a code is called    isotropic 
 if $C\subset C^\perp$ and   self-dual  if $C=C^\perp$.  Via the  reduction modulo $2$ map
$
\rho:  \bZ^n  \to  \bbF_2^n $ a code $C$ lifts  to a   submodule $ \rho^{-1}C$ of $\bZ^n$  with
$
 2 \cdot \bZ^n\subset \rho^{-1}C \subset \bZ^n
$. 
The first inclusion shows that $\rho^{-1}C$ is of finite index in $\bZ^n$.    It  inherits the structure of a lattice from  $\bZ^n$ 
equipped with its dot-product. However,  it turns  out to be more convenient to use    a different lattice structure, namely   $\bZ^n$ equipped with
the standard euclidean form scaled by $\half$, which is   $\bZ^n \left(\half\right)$ by the convention I  adopted. This leads to the lattice
\begin{align}\label{eqn:LatFromCode}
\Gamma_C:= \rho^{-1} C \subset \bZ^n\left(\half\right).
\end{align}
One easily sees that the new product on $\Gamma_C$ has  integral values  if and only if $C$ is isotropic. See e.\ g.\ \cite[\S~1.3]{ebeling}.

%
% I recall a few facts concerning this construction (cf. \cite[\S~1.3]{ebeling}):
%
%\begin{lemma}  $\Gamma_C$ is an integral  lattice if and only if $C$ is isotropic. 
% If $C$ is isotropic, then
% \begin{enumerate}
%  \item 
%$
%\disc{\Gamma_C}= 2^{n-2m},\quad m=\dim C.
%$
%\item The lattice $\Gamma_C$ is unimodular if and only if $C$ is self-dual. 
%\item  $ \Gamma_C^*=\rho^{-1} C^\perp $.
%\item  $\Gamma_C$  is even  if and only if  all of the  weights  of $C$ are divisible by $4$. 
%\end{enumerate}
%\label{lem:OnLatsAndCodes}
%\end{lemma}

I shall make use of the so-called Reed--Muller codes whose
  definition runs as follows.
Let $I$ be  a   finite set of size $N$. Then 
the functions $I \to \bbF_2$ form the $\bbF_2$-vector  space $\bbF_2^I$.
Suppose that $I$ itself consists of the points of an  $\bbF_2$-vector space $W $ of dimension $m$ so 
that $N= 2^m$.   I order these points as follows. Let $\set{e_0,\dots,e_{m-1}}$ be a basis for $W$ and
identify a point $x=\sum_{j=0}^{m-1} x_j e_j$ with the binary expansion  $n_x=\sum_{j=0}^{m-1} x_j 2^j$ of an integer between $0$ and $2^m-1$.  The natural order gives an ordering of the points of $W$.
A   function $f$ on $W$ determines a vector $(f_0,\dots, f_{N-1})\in \bbF_2^N$ as follows. First note that a point $x\in W$ determines the  unique
integer $j= n_x\in [0,\dots, N-1]$ and then I set  $f_j= f(x)$. 
The polynomial functions of degree $k$ on $W$ together with the zero function   define   a
subspace of $\bbF_2^N$ and this is also the case for   polynomial  functions of  degree  $\le k$. The latter define    the $k$-th order 
\textbf{\emph{Reed--Muller code}}  $S^{\le k} (W)\subset \bbF_2^N$, $N=2^m$. For an extensive treatment of these codes I refer to \cite[Chap. 4.5.]{vlint}.
 
\begin{exmple} Take $m=4$ and $k=1$. Then $N=2^4=16$ and $S^{\le 1}(\bbF_2^4)\subset \bbF_2^{16}$ is generated  by the   $4$ code words 
given as the rows  of the following
 $4\times 16$ matrix together with the vector with all coordinates equal to  $1$ arising from the constant function $1$. The columns correspond to the binary expansions of the numbers $0,...,15$
 and the rows correspond to the coordinate functions 
 $ x_0,x_1,x_2,x_3$:
\setcounter{MaxMatrixCols}{20}
\begin{align}
\label{eqn:RMcode}
\begin{pmatrix}
 0 &1 &0 & 1&0& 1 & 0&1&0&1&0&1&0&1&0&1 \\
 0&0&1&1& 0&0& 1&1& 0&0 &1&1& 0&0 &1&1 \\
 0&0&0&0&1&1&1&1&0&0&0&0&1&1&1&1 \\
 0&0&0&0&0&0&0&0&1&1&1&1&1&1&1&1
\end{pmatrix} .
\end{align}
\end{exmple}

The code \label{page:Dcode}
\begin{align} \label{eqn:CodeD}
\mathsf D_{m+1}:= S^{\le 1}  (W)
\end{align}
is a  code of dimension $m+1$ given by the affine linear functions on $W$.  One can view it 
as generated by the code   $\mathsf{C}_m  :=   S^1W $, consisting of  the linear functions on $W$,  together with  the   constant function $1$.
This last word has weight  $2^m$   while    the non-zero weights of $\mathsf{C}_m$ are  all  $2^{m-1}$ since the characteristic function of a hyperplane interpreted
as a code word has this weight. So the weights
of $\mathsf{D}_m$ itself (lowering the index by one) are  $0,2^{m-2}$ and $2^{m-1}$.
% The corresponding lattices $\Gamma_{\mathsf{C}_m}$ and $\Gamma_{\mathsf{D}_m}$ have discriminant  $2^{2^m-2m}$ 
%and if  $m\ge 4$, they are even (since   then the  weights are divisible by $4$). 
 %
The   code  $\mathsf{D}_m$  can be characterized as follows: 

\begin{lemma}[\protect{\cite[\S 4]{beau}}] 
%\begin{enumerate}
%	\item Let  $C\subset \bbF_2^n$ be a code of dimension $m$ and with non-zero weights   $>\half n$.
%	Then $n\ge 2^m-1$ and if equality holds, $C\simeq \mathsf{C}_m$.
  Let  $C\subset \bbF_2^n$ be  a code of dimension $m$ and with non-zero  weights   $\ge \half n$.
Then $n\ge 2^{m-1}$ and equality holds if and only if $C\simeq \mathsf{D}_m$ in which case
 it only has non-zero  weights $\half n= 2^{m-2}$ and $n=2^{m-1}$.
 \label{lemm:beaucodes}
\end{lemma}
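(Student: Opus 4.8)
The plan is to recast everything in terms of a generator matrix $G$ of $C$, viewing its $n$ columns as a multiset of vectors $v\in\bbF_2^m$, with $a_v$ the multiplicity of $v$ and $\sum_v a_v = n$. The codeword attached to $u\in\bbF_2^m$ is $uG$, and its weight equals the number of columns $v$ with $u\cdot v=1$, that is $f(u):=\sum_{u\cdot v=1}a_v$. The hypothesis that every non-zero word has weight $\ge\half n$ thus becomes: for each $u\ne 0$, at least half of the columns lie on the affine hyperplane $\{v:u\cdot v=1\}$. Dropping any zero columns lowers $n$ and only strengthens the weight inequality, so it is harmless for the bound.

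For the inequality $n\ge 2^{m-1}$ I would argue by induction on $m$ via the residual code. Choose a word $w_0$ of minimal weight $d\ (\ge\half n)$ and puncture $C$ on the support $S$ of $w_0$, $|S|=d$. Because we are over $\bbF_2$, a non-zero word supported inside $S$ must coincide with $w_0$, so the puncturing map has kernel $\{0,w_0\}$ and the residual code $C'$ has dimension $m-1$ and length $n-d$. Comparing the weights of $c$ and $c+w_0$ on $S$ and on its complement shows that every non-zero word of $C'$ has weight $\ge\lceil d/2\rceil$. Since $d\ge\half n$ one checks $\lceil d/2\rceil\ge\half d\ge\half(n-d)=\half n'$, so $C'$ again satisfies the weight hypothesis and induction gives $n'=n-d\ge 2^{m-2}$. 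Then $n=d+n'\ge\half n+2^{m-2}$, whence $n\ge 2^{m-1}$; the base case $m=1$ is trivial.

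For the characterisation at equality $n=2^{m-1}$ I would switch to character sums, which pin down the columns exactly. Put $\hat a(u)=\sum_v a_v(-1)^{u\cdot v}$, so that $f(u)=\half(n-\hat a(u))$ and the weight hypothesis reads $\hat a(u)\le 0$ for all $u\ne 0$, while $\hat a(0)=n$. Summing over all $u$ gives $\sum_u\hat a(u)=2^m a_0$; since the non-zero terms are $\le 0$ this is $\le\hat a(0)=n=2^{m-1}<2^m$, forcing $a_0=0$. Writing $b_u=-\hat a(u)\ge 0$ for $u\ne 0$ one gets $\sum_{u\ne 0}b_u=n$, hence $\sum_{u\ne 0}b_u^2\le\big(\sum_{u\ne 0}b_u\big)^2=n^2$. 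On the other hand Parseval, $\sum_u\hat a(u)^2=2^m\sum_v a_v^2$, together with the integrality bound $\sum_v a_v^2\ge\sum_v a_v=n$, yields $\sum_{u\ne 0}b_u^2\ge 2^m n-n^2=n^2$. Both extremal inequalities must therefore be equalities: every $a_v\in\{0,1\}$ (the columns are distinct) and exactly one $u^*\ne 0$ has $b_{u^*}=n$, i.e. $\hat a(u^*)=-n$. The last equation says that all $n=2^{m-1}$ distinct columns satisfy $u^*\cdot v=1$, so they fill the whole affine hyperplane $\{v:u^*\cdot v=1\}$. A change of basis of $\bbF_2^m$ carries $u^*$ to a coordinate functional, identifying $C$ with $\mathsf{D}_m$; the weights $2^{m-2}=\half n$ and $2^{m-1}=n$ are then exactly the ones already recorded for $\mathsf{D}_m$.

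The routine part is the inductive inequality; the delicate point is the equality analysis, where the whole argument hinges on making the two opposite estimates for $\sum_{u\ne 0}b_u^2$ collapse to a single value. The sub-steps I would be most careful about are justifying $a_0=0$ at equality (so that no zero columns are silently present) and then reading off from $\hat a(u^*)=-n$ that the columns \emph{exhaust} an entire affine hyperplane rather than merely lie inside one --- this is precisely where distinctness of the columns, forced by $\sum_v a_v^2=n$, is indispensable.
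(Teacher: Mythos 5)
Your proof is correct. Note, however, that the paper does not prove this lemma at all: it is quoted from Beauville \cite[\S 4]{beau} and used as a black box, so there is no internal argument to compare yours against. Judged on its own terms, your two-part argument is sound and self-contained. The inequality $n\ge 2^{m-1}$ via the residual code is a clean Plotkin/Griesmer-type induction, and the key verifications all hold: a non-zero word supported in the support $S$ of a minimum-weight word must have weight exactly $d$ and hence equal $w_0$; the two inequalities $x+y\ge d$ and $(d-x)+y\ge d$ do give $y\ge d/2\ge \half(n-d)$, so the punctured code of dimension $m-1$ inherits the hypothesis. The equality analysis via $\hat a(u)=n-2f(u)$, Parseval, and the squeeze $n^2\le \sum_{u\ne 0}b_u^2\le n^2$ correctly forces $a_v\in\{0,1\}$ and a unique $u^*$ with $\hat a(u^*)=-n$, so the columns exhaust the affine hyperplane $\{v: u^*\cdot v=1\}$; restricting linear functionals of $\bbF_2^m$ to that hyperplane gives exactly the affine-linear functions on an $(m-1)$-dimensional space (injectivity of the restriction plus a dimension count), which is the paper's definition of $\mathsf{D}_m$. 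You rightly flag the two delicate points ($a_0=0$ and distinctness of columns), and both are handled. The only cosmetic remark is that the trivial converse direction --- that $\mathsf{D}_m$ itself satisfies the hypotheses with $n=2^{m-1}$ --- is needed for the ``if and only if'' and is only implicit in your last sentence, but the paper has already computed the weights of $\mathsf{D}_m$, so nothing is missing.
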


I need a further property of the code  $\mathsf{D}_m$:
\begin{lemma} Suppose $  n>2^{m-1} $. There is no code in $\bbF^n_2$ with the property that   all coordinate subspaces of  $\bbF^n_2$
of dimension $2^{m-1}$ give a code isomorphic to  $\mathsf{D}_m$. \label{lemm:main}
\end{lemma}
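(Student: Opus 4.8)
The plan is to reduce to the boundary length $n=2^{m-1}+1$ and then extract a contradiction from nothing more than the weight list and the dimension of $\mathsf D_m$. Throughout write $\nu=2^{m-1}$, so that $\mathsf D_m$ has length $\nu$, dimension $m$, and nonzero weights exactly $\nu/2=2^{m-2}$ and $\nu=2^{m-1}$, as recorded just after \eqref{eqn:CodeD}. For a coordinate set $J$ let $\pi_J$ denote the projection of $\bbF_2^n$ onto the coordinates in $J$; the code ``given by'' a coordinate subspace is $\pi_J(C)$, and the one formal fact I will use is that projections compose, $\pi_J=\pi_J\comp\pi_{J'}$ whenever $J\subseteq J'$.

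First I would pass to a code of length $\nu+1$. Assume for contradiction that $C\subseteq\bbF_2^n$ with $n>\nu$ satisfies $\pi_J(C)\cong\mathsf D_m$ for every $J$ with $|J|=\nu$. Fix any $J'$ with $|J'|=\nu+1$ and put $C':=\pi_{J'}(C)\subseteq\bbF_2^{\nu+1}$. For every $\nu$-subset $J\subseteq J'$ the composition rule gives $\pi_J(C')=\pi_J(C)\cong\mathsf D_m$; hence it suffices to show that no length-$(\nu+1)$ code can have all of its one-coordinate deletions isomorphic to $\mathsf D_m$. For a coordinate $i$ write $D_i$ for $C'$ with its $i$-th coordinate removed, so that $D_i\cong\mathsf D_m$ by construction.

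The heart of the matter is a short weight computation applied to $C'$. Let $c\in C'$ with $c\neq 0$ and $c\neq\mathbf 1$ (the all-ones word); then $c$ has a zero coordinate $j$ and a one coordinate $i$. Deleting coordinate $j$ preserves the weight and produces a word of $D_j\cong\mathsf D_m$ that is still nonzero (it carries the one at $i$), so $w(c)\in\{\nu/2,\nu\}$; in particular $w(c)\ge\nu/2\ge 2$. Deleting coordinate $i$ then lowers the weight by one and, since $w(c)\ge 2$, again produces a nonzero word of $D_i\cong\mathsf D_m$, giving $w(c)-1\in\{\nu/2,\nu\}$, i.e. $w(c)\in\{\nu/2+1,\nu+1\}$. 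As soon as $\nu\ge 4$, equivalently $m\ge 3$ (and in particular for the value $m=5$ relevant to the application), the four numbers $\nu/2,\ \nu/2+1,\ \nu,\ \nu+1$ are pairwise distinct, so the two conditions on $w(c)$ are incompatible. Hence no such $c$ exists and $C'\subseteq\{0,\mathbf 1\}$, whence $\dim C'\le 1$. But $D_i=\pi_{J'\setminus\{i\}}(C')$ is a quotient of $C'$, so $\dim C'\ge\dim D_i=\dim\mathsf D_m=m\ge 3$, a contradiction.

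The argument is genuinely short, so the main obstacle is conceptual rather than computational: recognizing that one should discard the rich combinatorial structure of $\mathsf D_m$ and retain only its weight list $\{0,\nu/2,\nu\}$ and its dimension, and that probing a single codeword simultaneously at a zero-coordinate and at a one-coordinate already forces incompatible weights. The only real bookkeeping is the reduction to length $\nu+1$ through the composition of projections; once that is set up, the weight dichotomy closes the argument, with the harmless caveat that the four weights coincide in pairs for the degenerate small values $m\le 2$, which do not occur here.
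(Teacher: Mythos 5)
Your proof is correct and establishes the same statement that the paper's proof establishes: like the paper, you read ``the code given by a coordinate subspace'' as the projection of $C$ onto those coordinates, and like the paper you first reduce to length $2^{m-1}+1$ (the paper's version of this reduction is the terse remark that deleting the extra columns first changes nothing). The mechanism for the contradiction is genuinely different, though. The paper manipulates the explicit generator matrix of $\mathsf{D}_m$ whose columns are the binary expansions of $0,\dots,2^{m-1}-1$: the appended column must duplicate some column $M_k$, deleting a column $M_\ell$ with $\ell\ne k$ shifts the weight of a suitable generator row to $2^{m-2}\pm 1$, and that weight is forbidden. You discard the internal structure of $\mathsf{D}_m$ entirely and retain only its weight list and its dimension: probing one non-constant word at a zero coordinate and at a one coordinate yields the incompatible constraints $w(c)\in\{2^{m-2},2^{m-1}\}$ and $w(c)\in\{2^{m-2}+1,2^{m-1}+1\}$, forcing $C'\subseteq\{0,\mathbf 1\}$, which your final dimension count $\dim C'\ge m$ rules out. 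The paper gets its contradiction in one stroke from a single explicitly exhibited bad word and so needs no dimension argument; your version pays for its abstraction with that extra step, but it applies verbatim to any code with nonzero weights $\{2^{m-2},2^{m-1}\}$ and dimension at least $2$, and it correctly isolates the hypothesis $m\ge 3$ that the paper's proof also uses tacitly --- for $m=2$ the statement as written is actually false ($\bbF_2^3$ has every length-$2$ projection equal to $\mathsf{D}_2=\bbF_2^2$), which is harmless since only $m=5$ is needed.
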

\begin{proof} Set $N=2^{m-1}$.  Visualize the code  $\mathsf{D}_m\subset \bbF_2^N$ as generated by $(1,\dots,1)$  together with 
the rows of the $(m-1)\times N$ matrix, say $M$,  whose
columns are the binary expansions of the numbers $0,\dots, m-1$ ordered as in \eqref{eqn:RMcode}  where  $m=5, N=16$.

Suppose that I have a code   with the stated properties. Then it contains at least $(m-1)$ vectors whose first $N$ coordinates
form the matrix $M$. I now consider the corresponding vectors in $ \bbF^n_2$ forming a matrix $\widetilde M$. It suffices to consider the case $n=N+1$
since the effect of deleting $n-N-1$ columns from $\widetilde M$ and a further column from $M$ is the same for all $n\ge N+1$. 
Suppose  that the  last column of $\widetilde M$ corresponds to the decimal expansion  of $k\in [0,\dots, N-1]$.
Then this column is   the same as the $k$-th column $M_k$ of $M$.
Form now the  $(m-1)\times N$ matrix $M'$ by deleting from $\widetilde M$ a column $M_\ell$ of  the submatrix $M$ with $\ell\not=k$.
  Since all columns of $M$ are different, there is a row  $j$ such that
  the  entry  $M_{j \ell}$ of $M_\ell$ is distinct from the entry $M_{j k}$ of $M_k$. This implies  that the $j$-th row of the new matrix $M'$  has weight $\half N \pm 1$ which is a contradiction.
Hence there is no such code.
\end{proof}

\section{Sets of nodal curves on a surface}
Let $X$ be a compact complex surface containing a finite set $\cE=\set{E_1,\dots, E_n}$ of disjoint smooth rational curves
with $E_i^2=-2$, $i=1,\dots ,n$. Such curves are also called nodal curves, since these arise as minimal resolutions of ordinary double points.
Let  $L = \bZ   E_1 \operp \cdots\operp \bZ  E_n$ be   the abstract lattice with basis the nodal classes\footnote{Here $\operp$ denotes orthogonal direct sum.}
which can be identified with $\bZ^n(-2)$. Its dual is given by
$
   L^*= \half L \simeq \bZ^n\left(-\half\right)$.    The quotient map $L^* \to L^*/L =\half L/L \simeq \bbF_2^n$ is just the modulo $2$ map $\rho:\bZ^n\to \bbF_2^n$ used to construct lattices from codes 
   (see \eqref{eqn:LatFromCode}).
 Reversing  the procedure, one starts with the    lattice 
     \[
   N_\cE: =\text{ primitive closure of } L \text{ in } \ns X, 
   \]
   where $\ns X$ is the N\'eron--Severi lattice of $X$.
    Since there are inclusions
   $
  % \label{eqn:NodeCode1}
   L  \subset N_\cE\subset N_\cE^*\subset L^*\simeq \bZ^n(-\half)$, 
    setting 
   \begin{equation}
   \label{eqn:NodeCode2}
   C_\cE= N_\cE/L \subset L^*/L = \bbF_2^n,
      \end{equation}
   the lattice $N_\cE(-1)$ is precisely  the inverse image of the code $ {C_\cE}$    under the mod $2$ map. In other words, 
     \begin{equation}
   \label{eqn:NodeCode3}
   N_\cE = \Gamma_{C_\cE}(-1).
   \end{equation}
    Using that $N_\cE$ is primitive in the lattice $\ns X $, I arrive at
    an  equivalent description of the code $C_\cE$, namely
     \begin{align*}
      \label{eqn:NodeCode4}
       C_\cE \simeq \ker \left(   \bbF_2^n= \oplus_j \bbF_2E_j \simeq   L/2 L  \mapright{\varphi}      L/ 2 N_\cE \subset   N_\cE/2 N_\cE \subset 
   \ns X/2\ns X \right).
     \end{align*} 
     Here primitivity is used to establish the rightmost inclusion. 
    
     The first consequence of this description  is  a bound for $\dim C_\cE$. Since $\ns X $ is primitive
 in $H^2(X,\bZ)$,  the quotient $\ns X/2\ns X$ injects into  $ H^2(X,\bZ)/2 H^2(X,\bZ)= H^2(X,\bbF_2)$, 
 a  symplectic inner product  space in which  the  image of $\varphi$ is totally isotropic and so  
 has dimension $\le \half  b_2(X)$.  It follows that 
 \begin{equation}
 \label{eqn:BoundCodes}
 \dim C_\cE \ge n- \half  b_2(X).
 \end{equation}  
Secondly, using the notion of  an "even set of nodal curves", which, I recall, 
    means that the   sum of the nodal curves is divisible by $2$ in     $\ns X$,  there is a further consequence:
    
  \begin{lemma}  \label{lem:weightsinNodCodes}   Non-zero code words in $C_\cE$ correspond to
 even subsets of disjoint nodal curves, and conversely.   More precisely, with $e_1,\dots,e_n$ the standard basis of $\bbF_2^n$, the sum $\sum_{i\in J}    e_i$
 belongs to the code $C_\cE$  if and only if   $\sum_{i\in J} E_i$ is   even in $\ns X$.
 The weight of a word in $C_\cE$ is the cardinality of the corresponding set.
    \end{lemma}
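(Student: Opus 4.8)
The plan is to unwind the definition of the code $C_\cE$ given in \eqref{eqn:NodeCode2} and translate the lattice-theoretic content into the combinatorial statement about even sets. Recall that $C_\cE = N_\cE/L \subset L^*/L \cong \bbF_2^n$, where $L = \bZ E_1 \operp \cdots \operp \bZ E_n \cong \bZ^n(-2)$ and $L^* = \half L \cong \bZ^n(-\half)$. Under the identification $L^*/L \cong \bbF_2^n$, which is exactly the reduction-mod-$2$ map $\rho$, the standard basis vector $e_i \in \bbF_2^n$ is the image of $\half E_i \in L^*$. First I would fix an element $v = \sum_{i \in J} e_i \in \bbF_2^n$ and ask precisely when $v$ lies in the code $C_\cE = N_\cE/L$.

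Next I would make the key observation that $v \in C_\cE$ if and only if some lift of $v$ to $L^*$ actually lies inside $N_\cE$, equivalently inside $\ns X$ (since $N_\cE$ is the primitive closure of $L$, an element of $L^*$ lies in $N_\cE$ exactly when it lies in $\ns X$). The most natural lift of $v = \sum_{i \in J} e_i$ under $\rho$ is the element $\half \sum_{i \in J} E_i \in L^* \subset \ns X \otimes \bQ$. The point is that $v \in C_\cE$ precisely when this half-sum is an \emph{integral} class, i.e.\ when $\half \sum_{i \in J} E_i \in \ns X$, which is exactly the condition that $\sum_{i \in J} E_i$ is divisible by $2$ in $\ns X$ — the definition of an even set. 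Here I would be careful that any two lifts of $v$ differ by an element of $L \subset N_\cE$, so whether one lift lies in $N_\cE$ does not depend on the choice; thus it suffices to test the single natural lift $\half\sum_{i\in J}E_i$. This establishes the biconditional: $\sum_{i \in J} e_i \in C_\cE \iff \sum_{i \in J} E_i$ is even in $\ns X$.

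For the final sentence about weights, I would simply note that the weight of $v = \sum_{i \in J} e_i$, i.e.\ the number of its non-zero coordinates, is by definition $|J|$, which is the cardinality of the set $\set{E_i \st i \in J}$ of nodal curves indexed by $J$. This is immediate from the correspondence already set up and requires no further argument.

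The only real subtlety — and the step I would be most careful about — is the identification of the primitive-closure condition with integrality in $\ns X$: one must argue that a class in $L^*$ (a rational combination of the $E_i$ with half-integer coefficients) belongs to $N_\cE$ exactly when it belongs to $\ns X$. This follows because $N_\cE$ is by definition the saturation of $L$ in $\ns X$, so $N_\cE = (L \otimes \bQ) \cap \ns X = (L^* \otimes \bQ) \cap \ns X$ inside $\ns X \otimes \bQ$; hence $N_\cE = L^* \cap \ns X$ as sublattices, and membership in $N_\cE$ for an element of $L^*$ is equivalent to membership in $\ns X$. Everything else is a matter of chasing the identification $L^*/L \cong \bbF_2^n$ through the reduction map $\rho$, and I expect no computational obstacles there.
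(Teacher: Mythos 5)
Your proof is correct and is exactly the argument the paper intends: the lemma is stated there without proof precisely because it follows from unwinding the definition $C_\cE = N_\cE/L \subset L^*/L$, observing that $\sum_{i\in J} e_i$ lifts to $\half\sum_{i\in J}E_i \in L^*$, and using that the primitive closure satisfies $N_\cE = (L\otimes\bQ)\cap \ns X$ so that membership of this lift in $N_\cE$ is equivalent to $\half\sum_{i\in J}E_i \in \ns X$, i.e.\ to evenness. Your handling of the one subtlety (independence of the choice of lift, and the identification of primitivity with integrality in $\ns X$) is exactly right.
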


\section{Applying coding theory to nodal K3 surfaces}
\label{sec:k3}

   There are severe restrictions  on even sets of disjoint nodal curves on a complex K3 surface:     
 
 \begin{lemma} Let $X$ be a  complex  K3 surface containing an  even set of $k$ disjoint nodal curves. Then $k=0,8$ or $16$. 
 If $k=8$ the associated double cover is a K3 surface and if $k=16$ it is a complex torus. 
 \label{lem:EvenSetsOnK3}
 \end{lemma}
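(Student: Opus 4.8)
The plan is to realize the even set by an honest double cover and then read off the possible values of $k$ from the classification of surfaces with trivial canonical bundle. Write $\cE=\set{E_1,\dots,E_k}$ for the even set, so that $\sum_{i=1}^k E_i=2D$ for some $D\in\ns X$. Then $L:=\mathcal{O}_X(D)$ satisfies $L^{\otimes 2}\cong\mathcal{O}_X(\sum_i E_i)$, and since the branch divisor $B=\sum_i E_i$ is a disjoint union of smooth rational curves, I obtain a smooth double cover $\pi\colon Y\to X$ branched exactly along $B$.

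First I would analyse the geometry of $Y$. Because each $E_i$ lies in the branch locus, $\pi^*E_i=2\widetilde E_i$ with $\widetilde E_i\cong\bP^1$ mapping isomorphically onto $E_i$; from $(\pi^*E_i)^2=2E_i^2=-4$ I read off $\widetilde E_i^2=-1$, so the $\widetilde E_i$ are disjoint $(-1)$-curves. The canonical bundle formula for the double cover gives $K_Y=\pi^*K_X+R$ with $R=\sum_i\widetilde E_i$ the reduced ramification divisor; as $K_X=0$ this reads $K_Y=\sum_i\widetilde E_i$ on the nose. Contracting all the $\widetilde E_i$ by a morphism $\sigma\colon Y\to Y'$ and comparing with $K_Y=\sigma^*K_{Y'}+\sum_i\widetilde E_i$ forces $\sigma^*K_{Y'}=0$, whence $K_{Y'}\cong\mathcal{O}_{Y'}$ is trivial.

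Next I would invoke the classification: a smooth minimal surface with trivial canonical bundle is, in the complex-analytic category, either a K3 surface or a complex $2$-torus. To decide which, I compute the holomorphic Euler characteristic, a birational invariant: by Riemann--Roch on the double cover, $\chi(\mathcal{O}_{Y'})=\chi(\mathcal{O}_Y)=\chi(\mathcal{O}_X)+\chi(L^{-1})=4-\tfrac14 k$, using $D^2=\tfrac14\bigl(\sum_i E_i\bigr)^2=-\tfrac12 k$. A K3 surface has $\chi(\mathcal{O})=2$, forcing $k=8$; a complex torus has $\chi(\mathcal{O})=0$, forcing $k=16$; the empty even set gives $k=0$. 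This simultaneously identifies the minimal model of the double cover as a K3 surface when $k=8$ and as a complex torus when $k=16$. (The topological Euler number $e(Y')=2e(X)-e(B)-k=48-3k$ yields the same three cases.)

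The step I expect to be the real obstacle is proving that $K_{Y'}$ is \emph{honestly} trivial rather than merely numerically trivial: a numerically trivial canonical class would also admit an Enriques surface, for which $\chi(\mathcal{O}_{Y'})=1$ and hence the spurious value $k=12$ would slip through. Excluding this rests precisely on the sharp Hurwitz identity $R=\pi^*L$ for the ramification (so that $K_Y$ equals $\sum_i\widetilde E_i$ exactly, with no $2$-torsion ambiguity), together with injectivity of $\sigma^*$ on Picard groups. I would also take care to work throughout in the analytic category, since non-algebraic K3 surfaces are permitted, and to note that $Y$ is connected, which follows from $D^2\neq 0$ for $k>0$.
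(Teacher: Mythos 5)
Your proof follows the same overall strategy as the paper's: pass to the double cover branched along $\sum_i E_i$, contract the $(-1)$-curves lying over the branch curves, observe that the resulting minimal surface has trivial canonical bundle, and determine $k$ from a numerical invariant. The two places where you diverge are matters of execution. For triviality of the canonical bundle the paper lifts the nowhere-zero holomorphic $2$-form of $X$ and argues that its descent to the blown-down surface cannot vanish only at finitely many points, whereas you use the exact Hurwitz identity $K_Y=\pi^*K_X+R$ with $\mathcal{O}_Y(R)\cong\pi^*L$; both are correct, and your remark that it is the \emph{exact} (not merely numerical) triviality of $K_{Y'}$ that eliminates the Enriques possibility $\chi(\mathcal{O}_{Y'})=1$, $k=12$ is precisely the right point to stress. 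For the numerical step the paper computes $e(Y_k)=48-3k$ and applies Noether's formula, while you compute $\chi(\mathcal{O}_Y)=\chi(\mathcal{O}_X)+\chi(L^{-1})=4-\tfrac{k}{4}$ from $\pi_*\mathcal{O}_Y=\mathcal{O}_X\oplus L^{-1}$ and Riemann--Roch; these are equivalent. The one claim that needs repair is your classification statement: in the analytic category a minimal surface with trivial canonical bundle is a K3 surface, a complex torus, \emph{or a primary Kodaira surface} (non-K\"ahler, $b_1=3$). Since primary Kodaira surfaces also have $\chi(\mathcal{O})=0$, your computation still yields $k\in\set{0,8,16}$, but identifying the $k=16$ cover as a torus requires excluding them; the standard way is to note that $Y$, hence $Y'$, is K\"ahler --- for instance because $Y$ is a smooth hypersurface in the total space of the line bundle $L$ over the K\"ahler surface $X$ --- while a primary Kodaira surface is not. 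The paper's appeal to the classification table passes over the same point, so this is a shared and easily filled gap rather than a defect of your approach.
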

 
 \begin{proof} Let $\cE=\set{E_1,\dots,E_k}$ be an even set of double curves on $X$ and  from the  corresponding double cover. The inverse images of the 
 double curves are exceptional curves and blowing these down results in a minimal surface, say $Y_k$.  Since  the Euler number  of $X$ equals $e(X)=24$,
 one calculates easily that $e(Y_k )= 48 - 3k$. 
 
 On the other hand,    the canonical bundle of $Y_k$  is  trivial\footnote{ Indeed, the canonical bundle is trivialized on $X$ by a non-zero holomorphic two-form.
 On the double cover it lifts  as a holomorphic  two-form which is non-zero   outside  the branch locus and descends a holomorphic two-form $\omega$
  on $Y_k$, nowhere zero except maybe in   the points $p_j$. But  a section of a line bundle can at most have zeros  along a divisor and so $\omega$ trivializes  $K_{Y_k }$.}  so that  $p_g(Y_k )=1$
  and $c_1^2(Y_k )=0$. 
  The classification theorem~\cite[Ch. VI, Table 10]{4authors} then gives two possibilities:
  $ Y_k$  is either a torus or a K3-surface.      Combining this with Noether's  formula 
   \[
  \frac 1 {12} \cdot e(Y_k) = 2 - q(Y_k) =\frac 1 {12} (48 - 3k),
  \]
  gives two possibilities: either $k=8$ and then $Y_k$ is a K3 surface, or $k=16$ and then $Y_k$ is a torus.
 \end{proof}

\begin{rmk} The preceding argument is also valid for algebraic K3 surfaces in characteristics different from  $2$ since in that case there are only "classical" Enriques
 and bi-elliptic surfaces having the same invariants as in characteristic $0$. Compare the table on page 373 of \cite{bomhus}.\label{rmk:charp}
\end{rmk}

Combining Lemma~\ref{lem:weightsinNodCodes},  Lemma~\ref{lemm:beaucodes}  and    \eqref{eqn:BoundCodes}   one deduces:

  \begin{corr}  \label{cor:wgtsNik}
 For a set $\cE$  of disjoint nodal curves on a  complex K3 surface the  weights of the   associated code  are $0,8$ or $16$.
 If $\cE$ consists of $ 16$ nodal curves,  the associated code $C_{\cE}$  is  isomorphic to $\mathsf D_5$ where $\mathsf D_5$ is the Reed--Muller code~\eqref{eqn:CodeD}. 
If $\cE$ consists of $8$ nodal curves the code consists of the line in $\bbF^8_2$ spanned by $(1,\dots,1)$.
  \end{corr}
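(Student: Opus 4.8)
The plan is to feed the three preparatory results into one another, disposing of the weight statement first and then treating the two extremal cardinalities $n=8$ and $n=16$ separately. For the weights I would simply combine Lemma~\ref{lem:weightsinNodCodes} with Lemma~\ref{lem:EvenSetsOnK3}: the former identifies the weight of a word of $C_\cE$ with the cardinality of the corresponding even set of disjoint nodal curves, while the latter forces that cardinality to lie in $\{0,8,16\}$. No word of any other weight can occur, which settles the first assertion at once.

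For the case $n=16$ the idea is to pin down $\dim C_\cE$ from both sides so that it is forced to equal $5$. Since every non-zero weight is $8$ or $16$ and $8=\tfrac12\cdot 16=\tfrac12 n$, the code satisfies the hypothesis of Beauville's Lemma~\ref{lemm:beaucodes} with $n=16=2^{4}$; that lemma then yields $16=n\ge 2^{m-1}$, where $m=\dim C_\cE$, hence $m\le 5$. For the reverse inequality I would invoke the topological bound \eqref{eqn:BoundCodes}, which for a K3 surface ($b_2(X)=22$) reads $\dim C_\cE\ge 16-11=5$; in particular the code is non-trivial, so genuine words of the asserted weights are present. The two estimates squeeze, giving $\dim C_\cE=5$, so that the equality case $n=2^{m-1}$ of Lemma~\ref{lemm:beaucodes} is triggered and delivers $C_\cE\simeq \mathsf D_5$.

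For the case $n=8$ the admissible weights $0,8,16$ are cut down further by a trivial observation: a word in $\bbF_2^8$ has weight at most $8$, so the only possible non-zero weight is $8$, and the unique vector of weight $8$ in $\bbF_2^8$ is $(1,\dots,1)$. Consequently every non-zero word of $C_\cE$ equals $(1,\dots,1)$, so $C_\cE$ is contained in the line it spans, and it coincides with that line exactly when $\cE$ is an even set, i.e.\ when $(1,\dots,1)\in C_\cE$.

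I expect the $n=16$ case to be the only real obstacle, since the conclusion hinges on two a priori unrelated bounds meeting exactly at $5$. The delicate point is recognizing that $16$ is precisely $2^{5-1}$, so that the sharp equality clause of Beauville's lemma applies; this in turn rests entirely on the numerical coincidence that $b_2=22$ for a K3 surface makes \eqref{eqn:BoundCodes} produce the lower bound $5$ rather than anything smaller. The remaining two parts are, by contrast, immediate given the cited lemmas and an elementary count in $\bbF_2^8$.
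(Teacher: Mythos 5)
Your argument is correct and follows the paper's proof essentially verbatim: the weight statement is the same translation of Lemma~\ref{lem:EvenSetsOnK3} via Lemma~\ref{lem:weightsinNodCodes}, and the $n=16$ case is the identical squeeze between the topological bound \eqref{eqn:BoundCodes} and the equality clause of Lemma~\ref{lemm:beaucodes}. For $n=8$ you are in fact slightly more careful than the paper (which only says ``clear''), rightly noting that $C_\cE$ is \emph{contained} in the line spanned by $(1,\dots,1)$ and equals it precisely when $\cE$ is even.
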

\begin{proof} The first assertion is a direct translation of  Lemma~\ref{lem:EvenSetsOnK3} using Lemma~\ref{lem:weightsinNodCodes}.

Assume that $\#\cE=16$.
The estimate \eqref{eqn:BoundCodes}  for the code $C_{\cE }\subset \bbF_2^{16}$ states   that $m=\dim C_{\cE } \ge 16 -  \half\cdot 22= 5$.
Then, since the  non-zero weights are $\ge 8$, Lemma~\ref{lemm:beaucodes} implies  $16\ge 2^{m-1}\ge 2^4$. Hence, since then  equality holds, $C_{\cE }=\mathsf D_5$.

The assertion for $k=8$ is clear.
\end{proof}

Since the code $\mathsf D_5$ contains the word  $(1,\ldots,1)$,    
  the sum of all the nodal curves  is even and so   any set of $16$ disjoint nodal curves on a K3 surface $X$  is an even set. 
  This reproves a  result by V.\ V.\ Nikulin~\cite{nik2}.
  Forming the double cover gives a complex two-torus
 blown up in $16$ points and the quotient by the standard involution is a Kummer surface whose minimal resolution of singularities is $X$.
Hence:
    
 \begin{prop} Let $X$ be a  complex   K3 surface  containing  a set   $\cE $  of  $16$ disjoint nodal curves.
 Then  
%\item The   associated code $C_{\cE }$ (cf.~\eqref{eqn:NodeCode2}) is isomorphic to the code $\mathsf D_5\subset \bbF_2^{16}$.
  $\cE $  is an even set,   and $X$  is the minimal resolution of  a Kummer surface.  Moreover, the primitive sublattice of $\ns X$ spanned 
  by $\cE$ is isometric to the lattice $\Gamma_{D_5}(-1)$.
 \label{prop:kummer}
 \end{prop}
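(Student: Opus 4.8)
The plan is to read off all three assertions from the code--lattice dictionary of the preceding sections, with Corollary~\ref{cor:wgtsNik} as the essential input; it identifies the associated code as $C_\cE \simeq \mathsf{D}_5$, whose occurring weights are $0$, $8$ and $16$. First I would establish evenness. Since $\mathsf{D}_5$ contains the constant function $1$, the code $C_\cE$ contains a word of full weight $16$; but the only weight-$16$ word of $\bbF_2^{16}$ is $(1,\dots,1)$, so $(1,\dots,1)\in C_\cE$. By Lemma~\ref{lem:weightsinNodCodes} this word corresponds to the full index set $\set{1,\dots,16}$, which means precisely that $\sum_{i=1}^{16} E_i$ is divisible by $2$ in $\ns{X}$. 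Hence $\cE$ is an even set.

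Next I would produce the Kummer structure from this even set. Since $\cE$ is an even set of $16$ disjoint nodal curves, Lemma~\ref{lem:EvenSetsOnK3} in the case $k=16$ applies: the double cover $\pi\colon Z\to X$ branched along $\sum_i E_i$ has a minimal model that is a complex $2$-torus $T$. Concretely, each $E_i$ lies in the branch locus, so its reduced preimage is a smooth rational curve of self-intersection $-1$; contracting these $16$ disjoint $(-1)$-curves produces $T$, their images being $16$ points, and these are exactly the fixed locus of the involution $\sigma$ that the covering involution $\iota$ of $\pi$ induces on $T$. The one genuinely geometric step is to recognise this as the classical Kummer construction: an automorphism of a complex $2$-torus is affine, and an involution with only isolated fixed points must have linear part $-\mathrm{id}$, hence is a translate of $x\mapsto -x$, whose fixed points are the $16$ two-torsion points. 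Therefore $T/\set{\pm 1}$ is the singular Kummer surface of $T$ and $X=Z/\iota$ is its minimal resolution.

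Finally, the lattice statement is a formality given the dictionary. By \eqref{eqn:NodeCode3} the primitive sublattice $N_\cE$ of $\ns{X}$ generated by $\cE$ equals $\Gamma_{C_\cE}(-1)$, and since $C_\cE\simeq\mathsf{D}_5$ we obtain $N_\cE\simeq\Gamma_{\mathsf{D}_5}(-1)$, which is the asserted isometry with $\Gamma_{D_5}(-1)$. I expect the only real obstacle to be the middle step, namely verifying that $\sigma$ is a translate of the standard involution rather than merely checking that $T$ has the numerical invariants of a torus; this is where the classical analysis of involutions of complex tori with isolated fixed points is needed, and where one passes from a surface with the invariants of a Kummer surface to an honest one.
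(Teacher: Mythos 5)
Your proof is correct and follows essentially the same route as the paper: evenness from the word $(1,\dots,1)\in\mathsf D_5$ via Corollary~\ref{cor:wgtsNik}, the torus double cover from Lemma~\ref{lem:EvenSetsOnK3}, and the lattice claim from \eqref{eqn:NodeCode3}. The only difference is that you spell out why the induced involution on the torus is a translate of $x\mapsto -x$, a step the paper asserts without comment ("the quotient by the standard involution"), so your write-up is, if anything, slightly more complete.
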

 
 The lattice spanned by $16$ disjoint nodal curves on a (desingularised) Kummer surface  is also called the \emph{Kummer lattice}.
 Its characterization as  the abstract lattice $\Gamma_{\mathsf D_5}(-1)$ makes it possible to show the main result of this note:
 \begin{thm} A K3 surface cannot contain more than $16$ disjoint nodal curves.
 \end{thm}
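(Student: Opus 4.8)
The plan is to argue by contradiction, exploiting the hereditary nature of the configuration: if $X$ carried $n>16$ disjoint nodal curves $\cE=\set{E_1,\dots,E_n}$, then \emph{every} $16$-element subfamily would again be a set of $16$ disjoint nodal curves, and Corollary~\ref{cor:wgtsNik} would pin down its code to be $\mathsf D_5$. First I would fix the framework: the curves span the primitive sublattice $N_\cE$, and by \eqref{eqn:NodeCode2}--\eqref{eqn:NodeCode3} they determine a code $C_\cE\subset\bbF_2^n$ whose nonzero words have weight $8$ or $16$ by Corollary~\ref{cor:wgtsNik}. The feature I would isolate is that $\mathsf D_5$ contains the all-ones word, so via Lemma~\ref{lem:weightsinNodCodes} every $16$-element subfamily of $\cE$ is an \emph{even} set.

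With this in hand there are two routes to the contradiction, and I would use the one prepared for in Lemma~\ref{lemm:main}. Translating the conclusion of Corollary~\ref{cor:wgtsNik} across all subfamilies, the code $C_\cE$ has the property that the code induced on every coordinate subspace of dimension $16=2^{5-1}$ is isomorphic to $\mathsf D_5$. Since $n>16$, this is exactly the situation ruled out by Lemma~\ref{lemm:main} with $m=5$, giving the desired contradiction and hence $n\le 16$. The bound is then sharp by Proposition~\ref{prop:kummer}, in which the Kummer lattice $\Gamma_{\mathsf D_5}(-1)$ is realised by precisely $16$ curves.

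It is worth recording the more elementary route, which also isolates where the real content lies. Given any two curves $E_a,E_b$, choose (possible since $n-2\ge 15$) a $15$-element set $T$ among the remaining curves; then $T\cup\set{E_a}$ and $T\cup\set{E_b}$ are both even $16$-sets, so their symmetric difference $\set{E_a,E_b}$ is an even set of size $2$. This is impossible by Lemma~\ref{lem:EvenSetsOnK3}, which permits only the sizes $0,8,16$. This variant makes transparent that the geometric input — the Euler-characteristic computation behind Lemma~\ref{lem:EvenSetsOnK3} — is what does the decisive work.

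The step I expect to require the most care is the phrase ``the code induced on a coordinate subspace'' in the application of Lemma~\ref{lemm:main}: one must verify that the code attached to a subfamily $\cE'$ is genuinely the code cut out by $C_\cE$ on the corresponding coordinates, and not some a priori larger code obtained by projection. This is precisely what Lemma~\ref{lem:weightsinNodCodes} secures, since membership in $C_\cE$ is characterised intrinsically by the evenness of $\sum_{i\in J}E_i$ in $\ns X$, so a word supported on $\cE'$ lies in $C_\cE$ if and only if it lies in $C_{\cE'}$. Once this compatibility is pinned down, the combinatorial heart — the column-counting inside $\mathsf D_5$ — is already packaged in Lemma~\ref{lemm:main}, and the argument is complete.
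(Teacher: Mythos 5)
Your primary route is exactly the paper's proof: every $16$-element subfamily has associated code isomorphic to $\mathsf D_5$ by Corollary~\ref{cor:wgtsNik}, and Lemma~\ref{lemm:main} with $m=5$ rules out a code in $\bbF_2^n$, $n>16$, all of whose $16$-dimensional coordinate subspaces cut out $\mathsf D_5$. The point you flag as delicate --- that the code attached to a subfamily $\cE'$ is the set of words of $C_\cE$ supported on $\cE'$, and not a projection --- is correctly resolved by the intrinsic evenness criterion of Lemma~\ref{lem:weightsinNodCodes}, and is indeed the right reading of ``coordinate subspace'' in Lemma~\ref{lemm:main}; the paper leaves this compatibility implicit.

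Your second route is correct and genuinely different, since it bypasses Lemma~\ref{lemm:main} entirely. The only input it takes from the coding theory is that every $16$-element subfamily is even (the all-ones word of $\mathsf D_5$, i.e.\ Corollary~\ref{cor:wgtsNik} combined with Lemma~\ref{lem:weightsinNodCodes}); the symmetric difference of $T\cup\set{E_a}$ and $T\cup\set{E_b}$ then yields an even pair $\set{E_a,E_b}$, which Lemma~\ref{lem:EvenSetsOnK3} forbids since $2\notin\set{0,8,16}$ --- equivalently, $e_a+e_b$ would be a weight-$2$ word of $C_\cE$, contradicting the first assertion of Corollary~\ref{cor:wgtsNik}. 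This is shorter and avoids the column-counting argument altogether; what the paper's route buys instead is a self-contained combinatorial statement about $\mathsf D_m$ (Lemma~\ref{lemm:main}) that isolates the obstruction purely in coding theory and does not require returning to the geometry of even sets. Either way the decisive geometric input is the $0/8/16$ constraint of Lemma~\ref{lem:EvenSetsOnK3}, as you observe.
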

 \begin{proof} Suppose there is a set $\cE$ consisting of  $n>16$ disjoint nodal curves on the surface. 
 A subset of $16$ nodal curves define a coordinate subspace of the code $C_\cE\subset \bbF_2^n$, which, by Corollary~\ref{cor:wgtsNik} must be  isomorphic to $\mathsf D_5$.
 Lemma~\ref{lemm:main} then implies the result.
 \end{proof}
 Suppose $\bar X$ is a compact complex surface with at most du Val singularities,  i.e.\ double points of type $A_n$-$D_n$-$E_n$. The corresponding lower case letter denoting
 the number of each type, put 
 \[
\delta(X) =\sum  (a_n+e_n)\cdot \left[ {n+1}\over 2\right]+d_n \cdot  
             \left[ n+2\over 2\right],
             \]
             where $X$ is the minimal resolution of singularities of $\bar X$.
             The number $\delta(X)$ gives the number of disjoint nodal curves on $X$ coming from    desingularizing $\bar X$  and so one finds: 
 \begin{corr} If $X$ is a K3 surface which is the  minimal resolution of singularities of  a surface  having at most du Val singularities, then $\delta( X)\le 16$.
 \end{corr}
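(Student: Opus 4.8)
The plan is to recognize $\delta(X)$ as the maximal number of pairwise disjoint nodal curves that can be extracted from the exceptional locus of the minimal resolution $X\to \bar X$, after which the Theorem above applies verbatim. Recall that resolving a du Val singularity of type $A_n$, $D_n$ or $E_n$ replaces the point by a configuration of smooth rational $(-2)$-curves whose dual graph is the Dynkin diagram of the corresponding type; two of these curves are disjoint exactly when the associated vertices are non-adjacent. Furthermore, the exceptional divisors over distinct singular points of $\bar X$ are mutually disjoint. Hence the largest collection of pairwise disjoint nodal curves one can choose among all exceptional curves has size equal to the sum, over all singularities, of the maximal cardinality of an independent set of vertices in the associated Dynkin diagram.

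First I would carry out the elementary combinatorial computation of these maximal independent sets and check that they reproduce the coefficients in the definition of $\delta(X)$. For $A_n$, which is a path on $n$ vertices, the alternating choice gives an independent set of size $\lceil n/2\rceil = \left[\tfrac{n+1}{2}\right]$. For $D_n$, viewing the diagram as a path $v_1 - \cdots - v_{n-2}$ with two additional vertices attached at $v_{n-2}$, the optimal choice takes both fork endpoints together with a maximal independent set of the sub-path $v_1-\cdots-v_{n-3}$, giving $2 + \left[\tfrac{n-2}{2}\right] = \left[\tfrac{n+2}{2}\right]$. For each of $E_6, E_7, E_8$, whose diagram has a single trivalent vertex with arms of edge-lengths $(1,2,2)$, $(1,2,3)$, $(1,2,4)$ respectively, inspecting the cases (it is best to avoid the branch vertex and take alternate vertices along each arm) yields $3, 4, 4$, that is $\left[\tfrac{n+1}{2}\right]$. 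Summing over the $a_n$ copies of $A_n$, the $d_n$ copies of $D_n$, and the $e_n$ copies of $E_n$ then gives exactly $\delta(X)$.

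With this identification the conclusion is immediate: the chosen curves are genuinely disjoint smooth rational $(-2)$-curves on the K3 surface $X$, so the Theorem above bounds their number by $16$, whence $\delta(X)\le 16$.

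I expect the only genuine work to lie in the combinatorial step, namely confirming that the maximal independent-set sizes for $D_n$ and $E_n$ agree with the stated integer parts $\left[\tfrac{n+2}{2}\right]$ and $\left[\tfrac{n+1}{2}\right]$; the branch vertex makes these slightly less transparent than the clean alternating pattern of the $A_n$ chain, but each is settled by the case analysis sketched above and requires no new ideas.
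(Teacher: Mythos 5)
Your proposal is correct and follows exactly the paper's (largely implicit) argument: the paper simply asserts that $\delta(X)$ counts the disjoint nodal curves obtained from desingularizing $\bar X$ and then invokes the Theorem, while you supply the routine verification that the coefficients $\left[\frac{n+1}{2}\right]$ and $\left[\frac{n+2}{2}\right]$ are the maximal independent-set sizes of the corresponding Dynkin diagrams. Your combinatorial checks for $A_n$, $D_n$ and $E_6,E_7,E_8$ are all accurate, so nothing is missing.
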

 
 In particular, $\bar X$ can have at most $16$ ordinary nodes, or $A_1$-singularities. More generally,  $\delta(X)=16$ if there are only $A_1$ or $A_2$ singularities, but
 otherwise it is strictly smaller.   For instance, one can have at most four singularities which all  are  of types  $A_{16}, D_6$, $D_7,E_7$ or $E_8$. Also note the discrepancy with the Milnor number
 $\mu(X)=\sum n (a_n+d_n+e_n)$. Indeed  the more singularities with high Milnor number, the closer  $\delta(X)/\mu(X)  $ gets to $\half$.
 
\begin{rmk} \label{rmk:refining}     One can say more for specific types of projective K3 surfaces. For instance, a degree $6$   K3 surface in $\bP^5$, necessarily
             a complete intersection of a quadric and a cubic,    can have no more than $15$ double points.    See   \cite{kalker}.                                                                                                                                                                                                                                                                                                                                                                                                                                                                                                                                                                                                                                      
\end{rmk}

\end{document}